\theoremstyle{definition}
\newtheorem{thm}{Theorem}[section]
\newtheorem{lemm}[thm]{Lemma}
\DeclareMathOperator{\Mod}{Mod }
\DeclareMathAlphabet{\mathpzc}{OT1}{pzc}{m}{it}
\begin{document}
\title[Quasiconformal parametrization with small dilatation]{Quasiconformal parametrization of metric surfaces with small dilatation}

\author{Matthew Romney}
\address{Department of Mathematics, University of Illinois at Urbana-Champaign, Urbana, IL 61801, USA.} 
\email{romney2@illinois.edu}

\subjclass[2010]{30L10, 52A10}
\keywords{Quasiconformal mapping, conformal modulus, convex body}

\maketitle

\begin{abstract}
We verify a conjecture of Rajala: if $(X,d)$ is a metric surface of locally finite Hausdorff 2-measure admitting some (geometrically) quasiconformal parametrization by a simply connected domain $\Omega \subset \mathbb{R}^2$, then there exists a quasiconformal mapping $f: X \rightarrow \Omega$ satisfying the modulus inequality $2\pi^{-1}\Mod \Gamma \leq \Mod f\Gamma \leq 4\pi^{-1}\Mod \Gamma$ for all curve families $\Gamma$ in $X$. This inequality is the best possible. Our proof is based on an inequality for the area of a planar convex body under a linear transformation which attains its Banach-Mazur distance to the Euclidean unit ball.
\end{abstract}

\section{Introduction}

A growing body of recent literature has studied the following {\it quasiconformal uniformization problem}: for a metric space $(X,d)$ homeomorphic to a domain $\Omega$ in $\mathbb{R}^n$ or $\mathbb{S}^n$, under what conditions does there exist a quasiconformal (or quasisymmetric) parametrization of $X$ by $\Omega$? This question originated largely in the work of Semmes; see especially \cite{Sem:96b} and \cite[Qu. 3-7]{HeiS:97}. A landmark paper of Bonk and Kleiner \cite{BonkKleiner} gives a complete description of those spaces $X$ admitting a quasisymmetric parametrization by $\mathbb{S}^2$ under the assumption that $X$ is Ahlfors 2-regular; a necessary and sufficient condition is that $X$ be linearly locally contractible. 

A similar theorem for geometrically quasiconformal parametrizations was recently proven by Rajala \cite{Raj16} in the setting of metric surfaces homeomorphic to $\mathbb{R}^2$ or $\mathbb{S}^2$ with locally finite Hausdorff 2-measure. A new condition called {\it reciprocality} is introduced which is necessary and sufficient for the existence of the desired quasiconformal parametrization. The original Bonk--Kleiner theorem can then be obtained as a corollary. We refer the reader to the introduction of Rajala's paper for additional background and references. See also Lytchak and Wenger \cite{LyWen:17} for other new results on quasiconformal parametrizations in somewhat the same spirit. 

We recall now the relevant definitions. Let $(X,d,\mu)$ be a metric measure space. Given a family $\Gamma$ of curves in $X$, the {\it $p$-modulus} of $\Gamma$ is 
$$\Mod_p \Gamma := \inf_\rho \int_X \rho^p d\mu ,$$
the infimum taken over all Borel functions $\rho:X \rightarrow [0, \infty]$ such that $\int_{\gamma} \rho\, ds \geq 1$ for every locally rectifiable curve $\gamma \in \Gamma$. A homeomorphism $f: (X,d,\mu) \rightarrow (Y, d', \nu)$ is {\it $K$-geometrically quasiconformal with exponent $p$} if $$K^{-1}\Mod_p \Gamma \leq \Mod_p f(\Gamma) \leq K\Mod_p \Gamma$$ for all curve families $\Gamma$ in $X$. The smallest value $K_O$ such that $\Mod_p \Gamma \leq K_O\Mod_p f(\Gamma)$ for all curve families $\Gamma$ in $X$ is called the {\it outer dilatation} of $f$. Similarly, the smallest value $K_I$ such that $\Mod_p f(\Gamma) \leq K_I \Mod_p \Gamma$ for all curve families $\Gamma$ in $X$ is the {\it inner dilatation}. 

If $p$ is understood, we say simply that $f$ is {\it $K$-quasiconformal} or {\it quasiconformal}. In this note, we always take $p=2$ and we write $\Mod \Gamma$ in place of $\Mod_2 \Gamma$. We will assume that a metric space $(X,d)$ is equipped with the Hausdorff 2-measure.

The same paper of Rajala also examines a related question: if such a quasiconformal parametrization exists, can one find a quasiconformal mapping which improves the dilatation constants $K_O$ and $K_I$ to within some universal constants? If so, what is the best result of this type? Rajala obtains the following theorem \cite[Thm. 1.5]{Raj16}:

\begin{thm}{(Rajala)} \label{thm:rajala}
Let $\Omega \subset \mathbb{R}^2$ be a simply connected domain and $(X,d)$ a metric space of locally finite Hausdorff 2-measure. There exists a quasiconformal homeomorphism $f: X \rightarrow \Omega$ if and only if there exists a 2-quasiconformal homeomorphism $f: X \rightarrow \Omega$. 
\end{thm}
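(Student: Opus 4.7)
The plan is to start from the given quasiconformal map $g\colon X \to \Omega$ and post-compose with a carefully chosen quasiconformal self-map $h\colon \Omega \to \Omega$, constructed via the measurable Riemann mapping theorem applied to a Beltrami coefficient arising from a pointwise ellipse approximation of the infinitesimal geometry of $X$ as seen through $g$. Setting $f = h \circ g$, the goal is that the dilatation of $f$ is at most $2$.

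First, I would extract an infinitesimal seminorm field from $g$. Standard quasiconformal theory applied to $g^{-1}\colon \Omega \to X$ (the ACL property, Sobolev regularity, and a.e.\ metric differentiability, all available in the metric-surface setting) yields a measurable family of seminorms $N_x$ on $\mathbb{R}^2$, $x \in \Omega$, describing the infinitesimal distortion of lengths by $g^{-1}$. Each unit ball $B_x = \{v \in \mathbb{R}^2 : N_x(v) \leq 1\}$ is an origin-symmetric convex body (possibly degenerate). At this level the length $\ell_X(g^{-1} \circ \gamma)$ is recovered as $\int N_x(\gamma'(t))\,dt$, and the modulus of a curve family in $X$ can be computed through integrals over $\Omega$ weighted by this seminorm field.

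Next, at a.e.\ $x \in \Omega$, I would replace $B_x$ by its John ellipse $E_x$, the unique maximum-area ellipse contained in $B_x$. Planar John theory provides the inclusions $E_x \subseteq B_x \subseteq \sqrt{2}\,E_x$, so the resulting field of ellipses defines a measurable Beltrami coefficient $\mu$ on $\Omega$ with $\|\mu\|_\infty < 1$. The measurable Riemann mapping theorem (combined with a Riemann map of $\Omega$ onto the disc) then produces a quasiconformal homeomorphism $h\colon \Omega \to \Omega$ with complex dilatation $\mu$, and I set $f := h \circ g$.

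Finally, I would verify the modulus inequalities $K^{-1}\Mod \Gamma \leq \Mod f(\Gamma) \leq K\Mod \Gamma$ with $K \leq 2$. After the change of variables by $h$, the pushforward $h_\ast E_x$ is a round disk at a.e.\ point, so the infinitesimal geometry of $f$ at $x$ is governed by how $B_x$ compares to $E_x$. The outer and inner bounds then reduce to controlling $|B_x|$ against $|E_x|$ and comparing weighted line integrals $\int_\gamma N_x$ against the Euclidean length of $\gamma$, both of which are handled by $E_x \subseteq B_x \subseteq \sqrt{2}\,E_x$. The main obstacle is making this pointwise comparison rigorous against the Hausdorff $2$-measure on $X$, especially when $N_x$ is degenerate on a set of positive measure; the sharper constants $2/\pi$ and $4/\pi$ advertised in the abstract would then follow by replacing the crude $\sqrt{2}$ Banach--Mazur bound with the sharp convex body area inequality announced in the abstract.
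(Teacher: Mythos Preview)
Your proposal is essentially the same approach that the paper attributes to Rajala and then adapts in Section~3: pull back via $g^{-1}$ to obtain a metric-differential norm field on $\Omega$, replace each unit ball by its John ellipse, solve the corresponding Beltrami equation with the measurable Riemann mapping theorem, and read off the dilatation bound $2$ from $E_x\subset B_x\subset\sqrt{2}\,E_x$; the paper's improvement to $(2/\pi,4/\pi)$ comes, exactly as you anticipate, from swapping the John ellipse for the Banach--Mazur ellipse and invoking Lemma~\ref{lemm:convex_body}. One small point: your worry about degenerate seminorms is not an obstacle here, since (as the paper records via \cite[Lem.~14.1,~14.2]{Raj16}) quasiconformality of $g$ forces the metric differential to be a genuine norm at a.e.\ point.
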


This result is proved using the measurable Riemann mapping theorem along with the classical John's theorem on convex bodies. The latter theorem asserts, in part, that any convex body $A$ in $\mathbb{R}^n$ contains a unique ellipsoid $E$ of maximal volume satisfying $E \subset A \subset \sqrt{n}E$, where the constant $\sqrt{n}$ is the best possible. The constant 2 in Theorem \ref{thm:rajala} is derived from the constant $\sqrt{2}$ in John's theorem for dimension two.

In this note, we prove the following improvement to Theorem \ref{thm:rajala}, which was conjectured by Rajala in \cite{Raj16}. 

\begin{thm}\label{thm:main}
Let $\Omega \subset \mathbb{R}^2$ be a simply connected domain and $(X,d)$ a metric space of locally finite Hausdorff 2-measure. There exists a quasiconformal homeomorphism $f: X \rightarrow \Omega$ if and only if there exists a quasiconformal homeomorphism $f: X \rightarrow \Omega$ satisfying 
\begin{equation} \label{equ:modulus}
\frac{2}{\pi}\Mod \Gamma \leq \Mod f\Gamma \leq \frac{4}{\pi}\Mod \Gamma.
\end{equation} 
\end{thm}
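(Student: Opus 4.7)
The plan is to refine Rajala's argument for Theorem~\ref{thm:rajala}, replacing the pointwise normalization by the John ellipse with a sharper one based on the Banach--Mazur distance. Starting from the hypothesized quasiconformal homeomorphism $g\colon X\to\Omega$ and applying Theorem~\ref{thm:rajala}, we may assume $g$ is $2$-quasiconformal; Rajala's analytic machinery then attaches, at $\mathcal{H}^2$-a.e.\ $z\in\Omega$, an infinitesimal seminorm on $\mathbb{R}^2$ whose unit ball $K_z$ is a centrally symmetric convex body depending Borel measurably on~$z$.

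The next step is, for a.e.\ $z$, to select a linear transformation $T_z\in GL(2,\mathbb{R})$ that realizes the Banach--Mazur distance from $K_z$ to the Euclidean unit disk~$D$, placing $A_z:=T_z(K_z)$ in the position $D\subset A_z\subset\lambda_z\,D$ with $\lambda_z=d_{BM}(K_z,D)\le\sqrt{2}$. A standard Borel selection theorem makes $z\mapsto T_z$ measurable. The $T_z$ determine a Beltrami coefficient $\nu$ on $\Omega$ with $\|\nu\|_\infty<1$, so by the measurable Riemann mapping theorem there is a quasiconformal $\phi\colon\Omega\to\widetilde\Omega$ onto a simply connected domain that realizes $\nu$; post-composing with a Riemann map lets us take $\widetilde\Omega=\Omega$. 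Then $f:=\phi\circ g\colon X\to\Omega$ is the candidate parametrization, with a.e.\ infinitesimal unit ball equal to $A_z$ in Banach--Mazur-optimal position.

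To derive \eqref{equ:modulus} from this, an infinitesimal computation on parallel-segment families in all rotational directions shows that the ratio $\Mod f\Gamma/\Mod\Gamma$ at a.e.\ point lies in the interval $[|A_z|/(\pi R_{A_z}^2),\,|A_z|/(\pi r_{A_z}^2)]$, where $r_{A_z}=1$ (since $D\subset A_z$) and $R_{A_z}\le\sqrt{2}$ are the Euclidean inradius and circumradius of $A_z$. Thus the modulus inequality reduces to two pointwise area bounds
\begin{equation*}
|A_z|\le 4 \qquad\text{and}\qquad |A_z|\ge 2R_{A_z}^2
\end{equation*}
for $A_z$ in Banach--Mazur-optimal position; the first yields $\Mod f\Gamma\le(4/\pi)\Mod\Gamma$ and the second yields $\Mod f\Gamma\ge(2/\pi)\Mod\Gamma$. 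Both bounds are attained simultaneously by the square $A=[-1,1]^2$, reflecting the sharpness of the constants in \eqref{equ:modulus}.

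The main obstacle will be proving these area bounds. The Banach--Mazur optimality of $A_z$ forces a contact structure between $A_z$ and the concentric disks $D$ and $\lambda_z D$ (analogous to John's contact condition but finer), and the task will be to convert this combinatorial/contact information into sharp area estimates, with the extremal square emerging naturally. I expect that a variational argument on the space of BM-optimal convex bodies, or else a direct geometric analysis exploiting the contact points, is what yields both bounds simultaneously.
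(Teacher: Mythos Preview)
Your proposal is correct and follows essentially the same route as the paper: replace the John ellipse by the Banach--Mazur optimal ellipse, straighten the resulting ellipse field with the measurable Riemann mapping theorem, and reduce to the pointwise area inequalities $2L(TA)^2\le|TA|\le4\ell(TA)^2$ for a symmetric convex body $A$ in Banach--Mazur optimal position, which the paper proves as Lemma~\ref{lemm:convex_body} via exactly the contact-point and variational analysis you anticipate. The one minor difference is that where you invoke a Borel selection theorem to make $z\mapsto T_z$ measurable, the paper instead proves (Lemma~\ref{lemm:uniqueness}) that the Banach--Mazur optimal ellipse is unique, so that the ellipse field is canonically defined.
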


Rajala's techniques, together with standard volume ratio estimates (see for instance \cite[Thm. 6.2]{Ball:97}), guarantee the existence of a quasiconformal map $f_O: X \rightarrow \Omega$ with outer dilatation $K_O \leq \pi/2$, and a quasiconformal map $f_I: X \rightarrow \Omega$ with inner dilatation $K_I \leq 4/\pi$. The improvement in Theorem \ref{thm:main} is in finding a map which satisfies both modulus inequalities simultaneously. 

Inequality \eqref{equ:modulus} cannot be improved, as shown by taking $(X,d) = (\mathbb{R}^2,{\|\cdot\|_\infty)}$, where $\|\cdot\|_\infty$ is the $\ell^\infty$ metric. That is, every quasiconformal map $f: (\mathbb{R}^2,{\|\cdot\|_\infty)} \rightarrow \mathbb{R}^2$ must satisfy $K_O \geq \pi/2$ and $K_I \geq 4/\pi$. Moreover, the identity map $\iota: (\mathbb{R}^2,{\|\cdot\|_\infty)} \rightarrow \mathbb{R}^2$ satisfies \eqref{equ:modulus}. These facts are proved in Example 2.2 of \cite{Raj16}.

The simple connectedness assumption is essential to Theorem \ref{thm:main}. For example, any $K$-quasiconformal mapping $f$ between the annular regions $\{x \in \mathbb{R}^2: 1 < |x| < a\}$ and $\{x \in \mathbb{R}^2: 1 < |x| < b\}$, $b \geq a$, must satisfy $K \geq \log b/\log a$. In particular, for any $K' \geq 1$ there exist annuli $A_1, A_2 \subset \mathbb{R}^2$ such that any $K$-quasiconformal map $f: A_1 \rightarrow A_2$ must satisfy $K \geq K'$. A similar fact holds for wedge domains in $\mathbb{R}^n$, $n \geq 3$, which is one indication that a result like Theorem \ref{thm:main} is only possible in dimension two. See V\"ais\"al\"a \cite[Sec. 39-40]{Vais1} for a discussion of quasiconformal mappings between annular and wedge domains. 

Finally, Theorem \ref{thm:main} remains true when $\mathbb{R}^2$ is replaced by $\mathbb{S}^2$, though for simplicity we do not address that case explicitly.  

\section{An area inequality for planar convex bodies} 

The key innovation for proving Theorem \ref{thm:main} is to replace the application of John's theorem by the two lemmas in this section, after which Theorem \ref{thm:main} follows by a straightforward modification of Rajala's proof. See the introductory notes by Ball \cite{Ball:97} for an overview of John's theorem and related results on volume ratios. However, we have not found any result comparable to our lemmas in the literature on convex bodies. 

In the following, $|E|$ will denote the area of the set $E \subset \mathbb{R}^2$. We also let $L(E) = \sup\{|z|: z \in E\}$ denote the outer radius of $E$ and $\ell(E) = \inf\{|z|: z \notin E\}$ denote the inner radius of $E$. A {\it convex body} is a compact convex set $A$ in $\mathbb{R}^2$ with nonempty interior; it is {\it symmetric} if $z \in A$ implies $-z \in A$. There is a natural correspondence between the set of norms on $\mathbb{R}^2$ and the set of symmetric convex bodies in $\mathbb{R}^2$. Namely, the unit ball for a norm on $\mathbb{R}^2$ is a symmetric convex body, while for any symmetric convex body $A$ the function $p(x) := \inf\{t>0: x/t \in A\}$ defines a norm on $\mathbb{R}^2$. Terms such as {\it ellipse} and {\it polygon} should be understood as including the interior of the respective objects.  

For a convex body $A \subset \mathbb{R}^2$ and a linear transformation $T \in GL(2,\mathbb{R})$, let $r(A,T) = L(TA)/\ell(TA)$. Define $\rho(A) = \inf r(A,T),$ the infimum taken over all $T \in GL(2,\mathbb{R})$. Notice that $\rho(A) \leq \sqrt{2}$ by John's theorem. Expressed in different terms, $\rho(A)$ is the (multiplicative) Banach-Mazur distance between $A$ and the closed Euclidean unit ball in $\mathbb{R}^2$. 

It is easy to verify that there is a matrix $T \in GL(2, \mathbb{R})$ such that $r(A,T)$ attains $\rho(A)$. Consider the family $\mathcal{T} = \{T \in GL(2, \mathbb{R}): 1/\sqrt{2} \leq \ell(TA) \leq L(TA) \leq 1\}$. By John's theorem, restricting to $T \in \mathcal{T}$ does not affect the infimal value of $r(A,T)$. For such a map
$$T = \begin{pmatrix} a & b \\ c & d \end{pmatrix},$$
we must have $a^2 + c^2 \leq \ell(A)^{-2}$ and $b^2 + d^2 \leq \ell(A)^{-2}$. This is seen by looking at the action of $T$ on the test points $\ell(A)e_1$ and $\ell(A)e_2$. We also have $|\det A| =|ad-bc| \geq L(A)^{-2}/2$. Hence the set $\mathcal{T}$ is compact as a subset of $GL(2,\mathbb{R})$ and it follows that a nonzero map $T$ minimizing $r(A,T)$ must exist.

\begin{lemm} \label{lemm:convex_body}
Let $A \subset \mathbb{R}^2$ be a symmetric convex body and $T \in GL(2,\mathbb{R})$ a linear map such that $r(A,T) = \rho(A)$. Then the image of $A$ satisfies $2L(TA)^2 \leq |TA| \leq 4\ell(TA)^2$.   
\end{lemm}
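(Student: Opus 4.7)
The plan is to pass to $TA$ so we may assume $T = I$ achieves $\rho(A)$, then normalize $\ell(A) = 1$, giving $L := L(A) \leq \sqrt{2}$ by John's theorem in dimension two. The goal becomes $2L^2 \leq |A| \leq 4$.

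I would first reduce to a single inequality via polar duality. A direct computation using $(TA)^\circ = (T^*)^{-1} A^\circ$ shows that $T = I$ also attains $\rho(A^\circ)$, and that $\ell(A^\circ) = 1/L$ and $L(A^\circ) = 1$; the two-dimensional Mahler inequality $|A|\,|A^\circ| \geq 8$ then shows that proving $|A^\circ| \leq 4 \ell(A^\circ)^2 = 4/L^2$ yields $|A| \geq 2L^2$. So it suffices to establish $|A| \leq 4$. Next, I would exploit the extremality of $T = I$ through first-order variations. For a symmetric matrix $M$ and $T_\epsilon = I + \epsilon M$, Danskin's formula applied to the sup/inf representations of $L$ and $\ell$ gives one-sided derivatives
\[
L'_+(M) = \frac{1}{L}\max_{p \in \mathcal F}\langle Mp, p\rangle, \qquad \ell'_+(M) = \min_{q \in \mathcal N}\langle Mq, q\rangle,
\]
where $\mathcal F = \partial A \cap \{|p| = L\}$ is the set of far-points and $\mathcal N = \partial A \cap \{|q| = 1\}$ is the set of near-points. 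Comparing both one-sided derivatives of $L(T_\epsilon A)/\ell(T_\epsilon A)$ at $\epsilon = 0$ yields, for every symmetric $M$,
\[
\max_{\tilde p}\langle M\tilde p, \tilde p\rangle \geq \min_{q}\langle Mq, q\rangle \quad \text{and} \quad \min_{\tilde p}\langle M\tilde p, \tilde p\rangle \leq \max_{q}\langle Mq, q\rangle,
\]
where $\tilde p = p/L$ and the extrema range over $\mathcal F$ or $\mathcal N$ as appropriate. By Hahn--Banach separation in the three-dimensional space of $2\times 2$ symmetric matrices, these two conditions are equivalent to the nonempty intersection
\[
\mathrm{conv}\{\tilde p\tilde p^T : p \in \mathcal F\} \cap \mathrm{conv}\{qq^T : q \in \mathcal N\} \neq \emptyset.
\]

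The final step is to convert this intersection into the bound $|A| \leq 4$. Picking a common element $M^* = \sum_i \lambda_i q_i q_i^T$---a positive semidefinite matrix of trace one supported on finitely many near-points---I would combine the near-point support constraints $|\langle q_i, x\rangle| \leq 1$ on $A$ with the moment identity given by $M^*$ to bound $|A|$. In the extremal case one has $M^* = I/2$ with $\mathcal N$ containing two perpendicular unit vectors, so that $A \subset [-1,1]^2$ and $|A| \leq 4$ directly. In general, however, the near-points of an extremal body need not contain a perpendicular pair---the regular hexagon is extremal with near-points at angles $\pi/6, \pi/2, 5\pi/6$---and the proof must instead proceed via a weighted averaging of the support constraints using the probability measure $\sum_i \lambda_i \delta_{q_i}$. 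Carrying out this averaging cleanly, via an integral inequality relating the weighted near-point support data to $|A|$, is the step I expect to be the main technical obstacle.
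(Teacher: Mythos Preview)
Your reduction via polar duality and the planar Mahler inequality is correct and elegant: since $(TA)^\circ=(T^*)^{-1}A^\circ$, the identity also realizes $\rho(A^\circ)$, and $|A|\,|A^\circ|\ge 8$ indeed collapses the two inequalities to the single upper bound $|A|\le 4\ell(A)^2$. Your variational step is also right, and in two dimensions it is equivalent to the structural fact the paper actually uses. The rank-one matrices $uu^T$ with $|u|=1$ form a circle in the trace-one plane of symmetric $2\times 2$ matrices (via $\theta\mapsto 2\theta$), so ``$\mathrm{conv}\{\tilde p\tilde p^T\}\cap\mathrm{conv}\{qq^T\}\neq\emptyset$'' is exactly the statement that the near-contact and far-contact directions \emph{alternate} on $[0,\pi)$: there exist $0=\theta_0<\theta_1<\theta_2<\theta_3<\pi$ with $|z_{\theta_0}|=|z_{\theta_2}|=L$ and $|z_{\theta_1}|=|z_{\theta_3}|=\ell$. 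The paper proves this alternation directly by a one-parameter stretch argument rather than Danskin/separation, but the content is the same.

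The genuine gap is your final step. ``Weighted averaging of the support constraints'' from the near-point decomposition $M^\ast=\sum_i\lambda_i q_iq_i^T$ only yields $A\subset\{x:\langle M^\ast x,x\rangle\le 1\}$, hence $|A|\le \pi/\sqrt{\det M^\ast}$. Feeding in the far-point side (using $p_j\in A$ and $\sum_j\mu_j p_jp_j^T=L^2M^\ast$) gives $\mathrm{tr}((M^\ast)^2)\le 1/L^2$, i.e.\ $\det M^\ast\ge (L^2-1)/(2L^2)$; at the extremal value $L=\sqrt 2$ this bounds $|A|$ only by $2\pi$, not by $4$. So the averaging idea as stated loses exactly the sharp constant you are after, and something geometrically finer is needed. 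What the paper does at this point is to use the alternating contact angles together with the elementary fact that a near contact at angle $\theta$ forces the chord through $\ell e^{i\theta}$ tangent to $S(0,\ell)$ to support $A$; covering by two such chords plus circular sectors gives the explicit majorant $M(\ell)=\pi-4\cos^{-1}\ell+4\ell\sqrt{1-\ell^2}$, and one checks $M(\ell)\le 4\ell^2$ on $[1/\sqrt2,1]$. (A dual inscribed figure gives the lower bound, which you no longer need thanks to Mahler.) Your framework reaches the same structural information, but to close the argument you will have to replace the quadratic-form averaging by this kind of sector-and-chord estimate; the moment identity alone is not sharp enough.
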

\begin{proof}
Let $\widetilde{A} = TA$. Without loss of generality we can assume that the outer radius satisfies $L(\widetilde{A}) = 1$. Let $\ell = \ell(\widetilde{A})$. Then from John's theorem it follows that $2^{-1/2} \leq \ell \leq 1$.  Where convenient, we will use complex notation for points in $\mathbb{R}^2$. For $\theta \in [0, 2\pi)$, let $z_\theta$ denote the unique point in $\partial \widetilde{A} \cap \{e^{i\theta}r:r>0\}$. By rotating if necessary, we will assume that $|z_0| = 1$. 

We first need a fact about the existence of contact points with the circles $S(0,\ell)$ and $S(0,1)$. Specifically, we claim that there exist values $0 = \theta_0 < \theta_1 < \theta_2 < \theta_3< \pi$ such that $|z_{\theta_0}| = |z_{\theta_2}| = 1$ and $|z_{\theta_1}| = |z_{\theta_3}| = \ell$. Suppose this does not hold. Then there exist $0<\theta_1 \leq \theta_3 < \pi$ such that $|z_{\theta_1}| = |z_{\theta_3}| = \ell$, $|z_\theta| < 1$ whenever $\theta_1 < \theta < \theta_3$, and $|z_\theta|> \ell$ whenever $0 < \theta<\theta_1$ or $\theta_3 < \theta < \pi$. Observe that if $\theta, \theta'$ are such that $|z_{\theta}| = 1$ and $|z_{\theta'}| = \ell$, then $|\theta - \theta'| \geq \cos^{-1}(\ell)$. In particular, $\theta_1 \geq \cos^{-1}(\ell)$ and $\theta_3 \leq \pi - \cos^{-1}(\ell)$. 

Consider now a small linear stretch in the direction $(\theta_1 + \theta_3)/2$. Expressed in a suitable orthonormal basis $\{v_1, v_2\}$ for $\mathbb{R}^2$, this linear stretch takes the form $T_\lambda: (x,y) \mapsto (\lambda x, y)$ for some sufficiently small parameter $\lambda > 1$. 

Let $\theta = (\theta_3-\theta_1)/2$. 
For sufficiently small $\epsilon>0$, consider the function
$$R_\epsilon(\lambda) = \frac{\lambda^2\cos^2(\theta + \cos^{-1}(\ell)-\epsilon) +  \sin^2(\theta + \cos^{-1}(\ell)-\epsilon)}{\lambda^2\ell^2\cos^2(\theta+\epsilon) +  \ell^2 \sin^2(\theta+\epsilon)}.$$
We obtain the functions $R_\epsilon(\lambda)$ by considering the (Euclidean) norm of the image of the points $e^{i(\theta_1 -\cos^{-1}(\ell)+\epsilon)}$ (the numerator) and $\ell^2 e^{i(\theta_1-\epsilon)}$ (the denominator), as expressed relative to the basis $\{v_1, v_2\}$. Then $R_\epsilon(\lambda)$ is an upper bound for $r(A,T_\lambda T)$ for sufficiently small $\lambda$ and satisfies $R_\epsilon(1) = r(A,T)$. In particular, $\frac{d}{d\lambda}r(A,T_{\lambda} T) \leq R_0'(\lambda)$. We compute
$$R_0'(\lambda) = \frac{-2\lambda \sqrt{1-\ell^2} \sin(2 \theta +\cos^{-1}(\ell))}{\ell^2 \left(\sin^2\theta + \lambda^2 \cos^2 \theta_1 \right)^2}.$$
Since $2\theta_1 + \cos^{-1}(\ell)< \pi$, we see that $R'(1)<0$. This contradicts the minimality of $r(A,T)$. The existence of the desired values $0 = \theta_0 < \theta_1 < \theta_2 < \theta_3 < \pi$ now follows.

We now estimate $|\widetilde{A}|$ from above. Write $\theta_\ell = \cos^{-1}(\ell)$. By covering $\widetilde{A}$ with the triangles $[0,e^{i(\theta_1 - \theta_\ell)},e^{i(\theta_1 + \theta_\ell)}]$, $[0,1,e^{i(\theta_3 - \theta_\ell)}, e^{i(\theta_3 + \theta_\ell)}]$ and the set $$\{re^{i\theta}: 0 \leq r \leq 1, \theta \in [0, \theta_1 - \theta_\ell] \cup [\theta_1 + \theta_\ell, \theta_2 - \theta_\ell] \cup [\theta_2 + \theta_\ell, \pi] \},$$ along with their reflections about the origin, we obtain 
$$|\widetilde{A}| \leq M(\ell) := \pi-4\cos^{-1}(\ell) + 4\ell \sqrt{1- \ell^2}.$$
See Figure \ref{fig:shaded_area_outer}. Observe that $M(2^{-1/2}) = 2 = 4(2^{-1/2})^2$, so the right inequality $|\widetilde{A}| \leq 4\ell^2$ holds for $\ell = 2^{-1/2}$. Next, compute $M'(\ell) = 8\sqrt{1-\ell^2}$. Since this satisfies $M'(\ell) \leq 8\ell$ when $2^{-1/2} \leq \ell \leq 1$, we obtain  $|\widetilde{A}| \leq 4\ell^2$ holds for all $\ell \in [2^{-1/2}, 1]$. 

We can estimate $|\widetilde{A}|$ from below using the polygons $[0,1,\ell e^{i\theta_\ell}]$,  $[0,-1,\ell e^{i(\pi - \theta_\ell)}]$, $[0,\ell e^{i \theta_2-\theta_\ell}, e^{i\theta_2}, \ell e^{i \theta_2+\theta_\ell} ]$ and the set $$\{re^{i\theta}: 0 \leq r \leq \ell, \theta \in [\theta_\ell, \theta_2 - \theta_\ell] \cup [\theta_2 + \theta_\ell, \pi - \theta_\ell] \}.$$ See Figure \ref{fig:shaded_area_inner}. This gives
$$|\widetilde{A}| \geq m(\ell) := (\pi - 4\cos^{-1}(\ell)) \ell^2 + 4 \ell\sqrt{1-\ell^2}.$$
Now $m(2^{-1/2}) = 2$, so the left inequality $2L^2 \leq |\widetilde{A}|$ holds when $\ell = 2^{-1/2}$. Since $m'(\ell) = 2\pi \ell + 4\sqrt{1-\ell^2} - 8\ell \cos^{-1}(\ell) \geq 0$, we obtain $2 \leq |\widetilde{A}|$ for all $\ell \in [2^{-1/2}, 1]$. This completes the proof.
\end{proof}

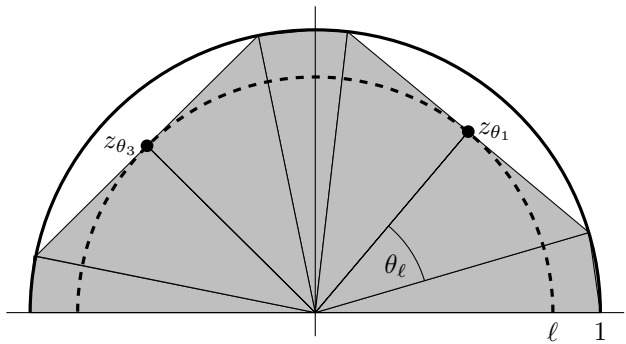
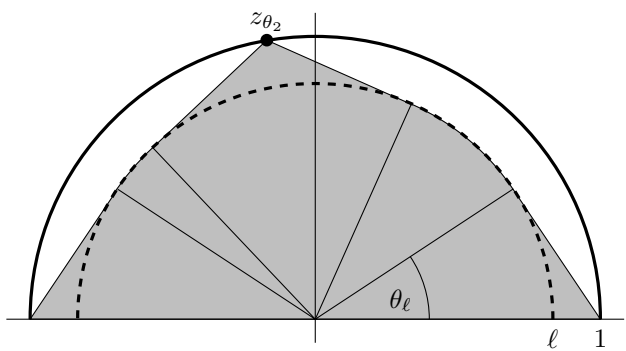
\begin{figure}[tb] \centering
\begin{subfigure}[tb]{\textwidth}
\centering
\begin{tikzpicture}[>=latex,scale=1.25]
	 \draw[fill=lightgray] (3,0) arc (0:16.4:3);
	 \draw[fill=lightgray] (0,0) -- (3,0) --(2.88,.85) -- (0,0);
	 \draw[fill=lightgray] (0,0) -- (.34,2.98) -- (0,3) -- (-.6,2.94) -- (0,0);	 
	 \draw[fill=lightgray] (0,0) -- (-3,0) --(-2.96,.6) -- (0,0); 
	  \draw[fill=lightgray] (0,0) -- (2.88,0.85) -- (1.61,1.92) node[right] {$z_{\theta_1}$} -- (0,0) -- (1.61,1.92) -- (.34,2.98) -- (0,0);
	  \draw[fill=lightgray] (0,0) -- (-.6,2.94) -- (-1.77,1.77) node[left] {$z_{\theta_3}$} -- (0,0) -- (-1.77,1.77) -- (-2.94,.6) -- (0,0);
	  \draw[very thick, fill=black] (1.61,1.92) circle [radius=.05];
	  \draw[very thick, fill=black] (-1.77,1.77) circle [radius=.05];

  \draw (-3.25,0) -- (3.25,0) ;
    \draw (0,-0.25) -- (0,3.25) ;
  \draw[very thick] (3,0) node[below] {1} arc (0:180:3) ;
  \draw[very thick, dashed] (2.5,0) node[below] {$\ell$} arc (0:180:2.5);
   \draw (1.15,.336) arc (16.2:49.8:1.2); 
  \draw (.85,.5) node {$\theta_\ell$}; 
\end{tikzpicture}

\caption{The convex body $\widetilde{A}$ is contained in the shaded region and its reflection about the origin.}
\label{fig:shaded_area_outer}
\end{subfigure} 

\begin{subfigure}[tb]{\textwidth}
\centering

\begin{tikzpicture}[>=latex,scale=1.25]
	 \draw[dashed, fill=lightgray] (2.5,0) arc (0:180:2.5);
	  \draw[fill=lightgray] (0,0) -- (3,0) -- (2.08,1.38) -- (0,0);
	  \draw[fill=lightgray] (0,0) -- (-1.71,1.82)-- (-.51,2.96) node[above] {$z_{\theta_2}$} -- (1.01,2.28) -- (0,0);
	  \draw[fill=lightgray] (0,0) -- (-3,0) -- (-2.08,1.38)  -- (0,0);
	  \draw[fill=lightgray] (2.08,1.38) arc (33.6:65.8:2.5);
	  \draw[fill=lightgray] (-1.71,1.82) arc (133.2:147.4:2.7);
	  \draw[very thick, fill=black] (-.51,2.96) circle [radius=.05];
  \draw (-3.25,0) -- (3.25,0) ;
  \draw (0,-0.25) -- (0,3.25) ;
  \draw[very thick] (3,0) node[below] {1} arc (0:180:3) ;
  \draw[very thick, dashed] (2.5,0) node[below] {$\ell$} arc (0:180:2.5);
   \draw (1.2,0) arc (0:33.6:1.2); 
  \draw (.9,.2) node {$\theta_\ell$}; 
\end{tikzpicture}

\caption{The convex body $\widetilde{A}$ contains the shaded region and its reflection about the origin.} 
\label{fig:shaded_area_inner}
\end{subfigure} 

\caption{Estimating the area of $\widetilde{A}$.} 
\end{figure}

By taking $A = [-1,1] \times [-1,1]$ and $T$ to be the identity map, we see that Lemma \ref{lemm:convex_body} is sharp. 

The proof of the previous lemma allows us show the next fact, that the linear map which attains $\rho(A)$ is unique up to a conformal transformation. 

\begin{lemm} \label{lemm:uniqueness}
Let $A \subset \mathbb{R}^2$ be a symmetric convex body, and let $T_1, T_2 \in GL(2, \mathbb{R})$ be linear maps such that $r(A,T_1) = r(A,T_2) = \rho(A)$. Then $T_1 = \lambda QT_2$ for some $\lambda > 0$ and orthogonal transformation $Q \in O(2)$. 
\end{lemm}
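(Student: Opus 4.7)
The plan is to use the structural information from the proof of Lemma \ref{lemm:convex_body}, combined with a singular value decomposition of the ``difference'' $T_1 T_2^{-1}$. Set $B_i = T_i A$ and $S = T_1 T_2^{-1}$, so that $SB_2 = B_1$. After rescaling (which only affects the scalar $\lambda$ in the conclusion), I may assume $L(B_1) = L(B_2) = 1$, whence $\ell(B_1) = \ell(B_2) = \ell := 1/\rho(A)$. Writing $S = U \Sigma V^T$ with $\Sigma = \mathrm{diag}(\sigma_1, \sigma_2)$, $\sigma_1 \geq \sigma_2 > 0$, and $U, V \in O(2)$, the task reduces to showing $\sigma_1 = \sigma_2$, since then $S$ is a positive scalar multiple of the orthogonal map $UV^T$.

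Next I would let $B'' = V^T B_2$, so that $U \Sigma B'' = SB_2 = B_1$. Since $U$ and $V^T$ are orthogonal, $L(B'') = L(\Sigma B'') = 1$ and $\ell(B'') = \ell(\Sigma B'') = \ell$. Because $V^T T_2$ is itself a minimizer of $r(A, \cdot)$, applying the proof of Lemma \ref{lemm:convex_body} to $B'' = V^T T_2 A$ yields the alternating contact structure: there exist angles $0 \leq \theta_0 < \theta_1 < \theta_2 < \theta_3 < \pi$ with outer contacts of $B''$ at $\theta_0, \theta_2$ and inner contacts at $\theta_1, \theta_3$.

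The key step is to test these contact points against the two constraints on $\Sigma B''$. For an outer contact at angle $\theta$, the image $(\sigma_1 \cos \theta, \sigma_2 \sin \theta)$ lies in $\overline{D}(0, 1)$, giving
\begin{equation*}
\sigma_1^2 \cos^2 \theta + \sigma_2^2 \sin^2 \theta \leq 1,
\end{equation*}
while for an inner contact at angle $\theta'$, the image lies on $\partial(\Sigma B'')$ at distance at least $\ell$, giving the reverse inequality. Supposing $\sigma_1 > \sigma_2$ for contradiction and setting $c^2 = (1 - \sigma_2^2)/(\sigma_1^2 - \sigma_2^2) \in [0,1]$ (the bounds $\sigma_2 \leq 1 \leq \sigma_1$ being read off from $L(\Sigma B'') = 1$ and $\ell(\Sigma B'') = \ell$) and $\beta = \arccos c$, these conditions translate to $\theta_0, \theta_2 \in [\beta, \pi - \beta]$ and $\theta_1, \theta_3 \in [0, \beta] \cup [\pi - \beta, \pi)$. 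The chain $\theta_0 < \theta_1 < \theta_2$ then forces $\theta_1 > \beta$, hence $\theta_1 \in [\pi - \beta, \pi)$, and in turn $\theta_2 > \pi - \beta$, contradicting $\theta_2 \leq \pi - \beta$.

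The main obstacle I anticipate is handling the two degenerate endpoint cases cleanly. The case $c = 0$ corresponds to $\sigma_2 = 1 < \sigma_1$ and collapses the outer-contact interval to $\{\pi/2\}$, forcing $\theta_0 = \theta_2$ and violating the strict inequality; the case $c = 1$ corresponds to $\sigma_1 = 1 > \sigma_2$ and collapses the inner-contact set in $[0,\pi)$ to $\{0\}$, forcing $\theta_1 = \theta_3$. Both endpoint scenarios still exclude $\sigma_1 > \sigma_2$. Once $\sigma_1 = \sigma_2$ is established, the normalization $L(\Sigma B'') = L(B'') = 1$ forces the common singular value to be $1$, so $S = UV^T \in O(2)$ in the normalized setting; undoing the initial scaling gives $T_1 = \lambda Q T_2$ with $\lambda > 0$ and $Q \in O(2)$ in general.
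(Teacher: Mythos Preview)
Your argument is correct and is essentially the paper's own proof: both exploit the alternating outer/inner contact structure established in Lemma~\ref{lemm:convex_body} and then observe that the restriction of a quadratic form to the circle cannot satisfy the sign pattern $\leq 1,\ \geq 1,\ \leq 1,\ \geq 1$ at four strictly increasing angles unless it is identically $1$; your singular value decomposition merely diagonalizes the paper's general ellipse equation $ax^2+bxy+cy^2=1$, turning its $f_0$ into your $g$. One small caveat: since $V$ is fixed by the SVD you cannot further rotate $B''$ to force $\theta_0=0$, so the four contact angles lie a priori in some interval $[\theta_0,\theta_0+\pi)$ rather than in $[0,\pi)$, but as $g$ has period~$\pi$ this changes nothing in your level-set analysis.
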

\begin{proof}
Let $\widetilde{T}_1 = \lambda_1 Q_1T_1$ and $\widetilde{T}_2 = \lambda_2 T_2$, where we have chosen $\lambda_1, \lambda_2>0$ and $Q_1 \in O(2)$ so that $L(\widetilde{T}_1A) = L(\widetilde{T}_2A) = 1$ and that $|z_0| = 1$; here and throughout this proof $z_\theta$ denotes the unique point in $\partial \widetilde{T}_1A \cap \{e^{i\theta} r: r>0\}$. It suffices to prove the lemma with $\widetilde{T}_1, \widetilde{T}_2$ in place of $T_1, T_2$. 
Note that after making this reduction we must have $\lambda =1$.

Let $B$ denote the closed Euclidean unit ball in $\mathbb{R}^2$. The conclusion of the lemma holds if and only if $\widetilde{T_1}^{-1}(B) = \widetilde{T_2}^{-1}(B)$; thus it suffices to show that $\widetilde{T_1}\widetilde{T_2}^{-1}(\ell B) = \ell B$, where $\ell = 1/\rho(A) = \ell(\widetilde{T}_1A)$. The set $\widetilde{T_1}\widetilde{T_2}^{-1}(\ell B)$ is an ellipse $E'$, whose boundary consists of those points satisfying the equation $f(x,y) := ax^2 + bxy + cy^2 = 1$ for some $a,b,c \in \mathbb{R}$ with $a,c>0$ and $4ac-b^2>0$. Let $(x(\theta),y(\theta)) = (\ell \cos \theta, \ell \sin \theta)$ and let $f_0(\theta) = f(x(\theta),y(\theta))$. 

Recall the assumption that $r(A,\widetilde{T_1}) = r(A,\widetilde{T_2}) = \rho(A)$; this implies that $E' = \widetilde{T_1}\widetilde{T_2}^{-1}(\ell B) \subset \widetilde{T_1}A \subset \widetilde{T_1}\widetilde{T_2}^{-1}(B)$. In particular, if $\theta \in [0, \pi)$ is such that $|z_\theta| = 1$, then $\ell z_\theta \in E'$ by the second inclusion. On the other hand, if $\theta \in [0, \pi)$ is such that $|z_\theta| = \ell$, then $z_\theta \in \overline{\mathbb{R}^2 \setminus E'}$ by the first inclusion.  
It must follow that $f_0(0) = f_0(\pi) \leq 1$, $f_0(\theta_1) \geq 1$, $f_0(\theta_2) \leq 1$, and $f_0(\theta_3) \geq 1$, where $0 < \theta_1 < \theta_2 < \theta_3 < \pi$ satisfy $|z_{\theta_1}| = |z_{\theta_3}|= \ell$ and $|z_{\theta_2}| = 1$ as in the proof of Lemma \ref{lemm:convex_body}. 
Then $f_0$ must have at least three critical points on the interval $[0, \pi)$, unless in fact $f_0(\theta)$ is identically equal to $1$. This leads to a contradiction, since $f_0$ (being derived from the equation for an ellipse, or by inspection) cannot have more than two critical points over the interval $[0,\pi)$. The result follows. 
\end{proof}

One view on John's theorem is that it provides one with a canonical choice of ellipse associated to a convex body $A \subset \mathbb{R}^2$, namely the ellipse $E \subset A$ maximizing area. 
The point of the previous lemma is to justify a different notion of canonical ellipse for a convex body, related to minimizing distance to the Euclidean ball in the sense of the Banach-Mazur distance. 
This ellipse is obtained by taking $E = T^{-1}(B(0, \ell(TA)))$ for any $T \in GL(2, \mathbb{R})$ such that $r(A,T) = \rho(A)$. Lemma \ref{lemm:uniqueness} shows that this ellipse $E$ is independent of the choice of $T$.

\section{Proof of main theorem}

This section gives the proof of Theorem \ref{thm:main}. It is a modification of the proof which comprises Section 14 of \cite{Raj16}. As such, we will highlight the modifications while referring the reader to \cite{Raj16} for additional details. We will also follow the notation found there where convenient.

For a Lipschitz function $g: \Omega \subset \mathbb{R}^2 \rightarrow Z$ into a metric space $Z$, we use $MD(g,x)$ to denote the metric differential of Kirchheim \cite{Kirc:94} at the point $x \in \Omega$, which exists for a.e. $x \in \Omega$. As explained in \cite[Lem. 14.1, 14.2]{Raj16}, for every quasiconformal map $h: \Omega \rightarrow X$ there exist disjoint measurable sets $\Omega_j$ ($j=1,2,\ldots$) covering $\Omega$ up to a set of measure zero such that $h|\Omega_j$ is $j$-Lipschitz. The map $h|\Omega_j$ can be extended to a Lipschitz map $h_j: \mathbb{R}^2 \rightarrow \ell^\infty(X)$. Then for all $j \in \mathbb{N}$ and a.e $x \in \Omega$, $MD(h_j,x)$ is a non-zero norm on $\mathbb{R}^2$. This notation will be used in the following proof. 

\begin{proof}
Recall that we are assuming the existence of a quasiconformal homeomorphism $h = f^{-1}: \Omega \rightarrow (X,d)$. For a.e. $x \in \Omega$, we obtain a non-zero norm $G_x$ on $\mathbb{R}^2$ from the metric derivative of the function $h_j$ described above, where $j$ is such that $x \in \Omega_j$. For each such norm $G_x$, the set $C_x = \{y \in \mathbb{R}^2: G_x(y) \leq 1\}$ is a symmetric convex body in $\mathbb{R}^2$.

Let $T_x$ be an invertible linear mapping for which $L(T_xC_x)/\ell(T_xC_x) = \rho(C_x)$.  
Let $E_x = T_x^{-1}(B(0,\ell(T_xC_x)))$; this gives an ellipse field on $\Omega$ defined for a.e $x \in \Omega$. As we have seen from Lemma \ref{lemm:uniqueness}, the ellipse $E_x$ does not depend on our choice of $T_x$. Setting $E_x = B(x,1)$ for the remaining points in $\Omega$ gives an ellipse field defined on all $\Omega$. The associated complex dilatation is measurable and has a uniform bound less than 1. Observe that the analogous ellipse field in Rajala's proof was obtained using John's theorem instead.

Applying the measurable Riemann mapping theorem gives a quasiconformal mapping $\nu: \Omega \rightarrow \Omega$ such that 
\begin{equation*} \label{equ:qc_differential}
D\nu(x)(E_x) = B(0,r_x)
\end{equation*} 
for a.e. $x \in \Omega$ and some $r_x>0$. Let $C_x' = D\nu(x)(C_x)$, observing that $D\nu(x)$ differs from $T_x$ by a scaling factor and orthogonal transformation. 

Define $H = h \circ \nu^{-1}: \Omega \rightarrow X$. As above we obtain Lipschitz pieces $H_j = H|\Omega_j'$ for disjoint sets $\Omega_j' \subset \Omega$.  Then there exists $R_{x'}>0$ such that $C_{x}' = \{y \in \mathbb{R}^2: MD(H_j,x')(y) \leq R_{x'}\}$, for a.e. $x' \in \Omega$, where $j$ is such that $x' \in \Omega_j'$ and $x = \nu^{-1}(x')$. Hence for a.e. $x' \in \Omega$, the metric derivative satisfies $|MD(H_j,x')|= R_{x'}/r_x$, and the Jacobian $J_H$ is given by $J_H(x') = \pi R_{x'}^2/|C_{x}'|$. By Lemma \ref{lemm:convex_body} we see that
$$\frac{|MD(H_j,x')|^2}{J_H(x')} = \frac{|C_{x}'|}{\pi r_x^2} \leq \frac{4}{\pi} .$$
Following \cite{Raj16}, this suffices to show the inequality $\Mod \Gamma \leq 4\pi^{-1}\Mod H\Gamma$ for all curve families $\Gamma$ in $\Omega$.

Similarly we have
$$\ell(MD(H_j,x')) := \inf_{|z|=1} |MD(H_j,x')z| = \frac{R_{x'}}{L(C_{x}')}, $$
valid for a.e $x' \in \Omega$. This gives by Lemma \ref{lemm:convex_body}
$$\frac{J_H(x')}{\ell(MD(H_j,x'))^2} =\frac{\pi L(C_{x}')^2}{|C_{x}'|}  \leq \frac{\pi}{2}, $$
which suffices to show that $\Mod H\Gamma \leq (\pi/2)\Mod \Gamma $ for all curve families $\Gamma$ in $\Omega$. 
\end{proof}

{\bf \noindent Acknowledgments.} The author thanks Jeremy Tyson, Kai Rajala and Marius Junge for conversations related to the topic of this paper.

\bibliographystyle{abbrv}
\bibliography{biblio}

\def\cprime{$'$} \def\cprime{$'$}
\begin{thebibliography}{1}

\bibitem{Ball:97}
K.~Ball.
\newblock An elementary introduction to modern convex geometry.
\newblock In {\em Flavors of geometry}, volume~31 of {\em Math. Sci. Res. Inst.
  Publ.}, pages 1--58. Cambridge Univ. Press, Cambridge, 1997.

\bibitem{BonkKleiner}
M.~Bonk and B.~Kleiner.
\newblock Quasisymmetric parametrizations of two-dimensional metric spheres.
\newblock {\em Invent. Math.}, 150(1):127--183, 2002.

\bibitem{HeiS:97}
J.~Heinonen and S.~Semmes.
\newblock Thirty-three yes or no questions about mappings, measures, and
  metrics.
\newblock {\em Conform. Geom. Dyn.}, 1:1--12 (electronic), 1997.

\bibitem{Kirc:94}
B.~Kirchheim.
\newblock Rectifiable metric spaces: local structure and regularity of the
  {H}ausdorff measure.
\newblock {\em Proc. Amer. Math. Soc.}, 121(1):113--123, 1994.

\bibitem{LyWen:17}
A.~Lytchak and S.~Wenger.
\newblock Area minimizing discs in metric spaces.
\newblock {\em Arch. Ration. Mech. Anal.}, 223(3):1123--1182, 2017.

\bibitem{Raj16}
K.~Rajala.
\newblock Uniformization of two-dimensional metric surfaces.
\newblock {\em Invent. Math.}, 207(3):1301--1375, 2017.

\bibitem{Sem:96b}
S.~Semmes.
\newblock Good metric spaces without good parameterizations.
\newblock {\em Rev. Mat. Iberoamericana}, 12(1):187--275, 1996.

\bibitem{Vais1}
J.~V{\"a}is{\"a}l{\"a}.
\newblock {\em Lectures on {$n$}-dimensional quasiconformal mappings}.
\newblock Lecture Notes in Mathematics, Vol. 229. Springer-Verlag, Berlin-New
  York, 1971.

\end{thebibliography}
\end{document}